\documentclass[11pt]{article}
\usepackage[margin=1in]{geometry}
\usepackage{amsmath,amssymb,amsthm}
\usepackage{graphicx}
\usepackage{authblk}
\usepackage{float}
\usepackage{booktabs}
\usepackage{xcolor}
\usepackage{natbib}  
\usepackage{hyperref}
\usepackage{cleveref}

\newtheorem{theorem}{Theorem}
\newtheorem{assumption}{Assumption}

\newtheorem{remark}{Remark}

\title{\textbf{Economic resources, sporting efficiency and uncertainty: a stochastic optimization model of football performance in Africa}}

\author[1]{Thierno Thioune}
\affil[1]{Laboratory for Analysis, Research and Study of Development\\
University of Cheikh Anta Diop, Dakar, Senegal\\
\texttt{thierno.thioune@ucad.edu.sn}}

\author[2]{Babacar Mbaye Ndiaye}
\affil[2]{Laboratory of Mathematics of Decision and Numerical Analysis\\
University of Cheikh Anta Diop, Dakar, Senegal\\
\texttt{babacarm.ndiaye@ucad.edu.sn}}

\begin{document}
\maketitle

\begin{abstract}
We propose a stochastic optimization framework in which football performance is modeled as the result of an efficiency-driven production process under uncertainty. The model incorporates economic inputs, a latent efficiency variable evolving according to a stochastic differential equation, and a control variable representing investment in sports development. We derive the associated discounted Hamilton--Jacobi--Bellman (HJB) equation, characterize the optimal investment policy, prove a verification theorem under explicit growth conditions, and show that a bounded investment budget is required for the control problem to be well posed. We relate our efficiency measure to the stochastic frontier and data envelopment analysis (SFA/DEA) traditions used elsewhere in sports economics, and clarify what a dynamic control formulation adds relative to those static approaches. 
The theoretical results are illustrated with a numerical solution of the HJB equation -- validated by a grid-refinement convergence study -- and Monte Carlo simulations. 
A cross-sectional analysis of the 24 AFCON 2025 national teams, based on GDP, population, and FIFA ranking data, estimates country-specific conditional log-efficiency residuals using heteroskedasticity-robust inference.
 We show that these residuals are mechanically and strongly correlated with FIFA points by construction, quantify this dependence, and interpret the results accordingly.
Teams such as Senegal and Morocco consistently outperform their GDP-implied potential, whereas others, including South Africa and Tanzania, tend to underperform.
 While these patterns are broadly consistent with observed AFCON outcomes, the estimated residual also captures unobserved factors, such as institutional quality, diaspora talent, and demographic structure, that are not included in the model.
\end{abstract}

\textbf{Keywords.} economic capacity, football performance, African national teams, stochastic optimization, uncertainty, stochastic differential equation, stochastic frontier analysis.

\section{Introduction}
This research examines the performance of African football by considering three main dimensions: economic resources (budgets, infrastructure, investments), sporting efficiency (the ability to translate available resources into sporting results), and the uncertainty associated with competitions (unpredictable events in matches, injuries, and unpredictable performances). Using a stochastic optimization model, the study seeks to determine how national federations can effectively allocate resources to maximize performance while managing the risks and uncertainties inherent in the sport.

The relationship between economic development and sporting performance has long been debated in sports economics \citep{szymanski2003,andreff2011}. While wealthier nations are often expected to perform better due to greater access to resources, empirical evidence from African football suggests a more complex reality: \citet{luizfadal2011} document that sports performance in Africa is better explained relative to GDP per capita than by GDP levels alone, and \citet{batarfireade2021} show that, globally, institutional quality and domestic footballing structures matter alongside GDP per capita in explaining national team performance.

In the context of the Africa Cup of Nations (AFCON), several countries with relatively modest economic indicators consistently outperform larger economies. This observation raises a fundamental question: is football performance primarily driven by economic capacity, or by the efficiency with which resources are transformed into sporting outcomes?

This paper proposes a stochastic optimization framework to address this question. Rather than assuming a direct deterministic relationship between economic variables and performance, we model performance as the outcome of: a production function based on economic inputs, a latent efficiency factor, and stochastic shocks reflecting the inherent uncertainty of sport.

The paper makes five contributions. First, we position the efficiency measure used throughout the paper relative to the stochastic frontier analysis (SFA) and data envelopment analysis (DEA) traditions that are the standard tools for measuring efficiency in sports economics (\Cref{sec:sfa-dea}). In addition, we are explicit about what a dynamic control formulation adds -- and does not add -- relative to those static methods. Second, we specify the efficiency dynamics directly as a controlled process for the log-efficiency variable, derive the associated discounted Hamilton--Jacobi--Bellman equation and the resulting optimal investment rule, and prove a verification theorem under explicit growth conditions (\Cref{sec:control}). Third, we show that the control problem requires a bounded investment budget to be well posed, and that the constrained value function can be validated by explicit finite-difference solution together with a grid-refinement convergence study anchored in the Barles--Souganidis convergence criteria (\Cref{sec:numerics}). Fourth, we provide transparent statistical inference for the cross-sectional estimation step, including heteroskedasticity-robust standard errors and a joint significance test with $N=24$ (\Cref{sec:estimation,sec:results}), and we explicitly quantify the extent to which the estimated efficiency residual mechanically reproduces the raw FIFA ranking (\Cref{sec:circularity}). Fifth, we illustrate the model numerically, both by solving the HJB equation on a grid and by Monte Carlo simulation, and further with a cross-sectional empirical exercise on the 24 national teams that qualified for AFCON 2025 (Morocco), using GDP, population, and FIFA ranking data.

The article is organized as follows. \Cref{sec:methodology} sets out the economic and stochastic building blocks and positions the model relative to SFA/DEA. \Cref{sec:control} formulates and solves the stochastic control problem, including the verification theorem. \Cref{sec:estimation} outlines the estimation strategy and its inferential requirements. \Cref{sec:numerics} reports the numerical illustration and the convergence study. \Cref{sec:results} presents the cross-sectional empirical results, the circularity diagnostic, and the discussion. Finally, in \Cref{sec:conclusion}, we present conclusions and perspectives.

\section{Methodology}\label{sec:methodology}
\subsection{Economic inputs}
For each country $i$ at time $t$, we define a vector of observable economic variables. An earlier specification used $X_{i,t} = (GDP_{i,t}, GDPpc_{i,t}, Pop_{i,t})$, but $GDP_{i,t} \equiv GDPpc_{i,t}\times Pop_{i,t}$ by accounting identity, so the three variables are perfectly collinear in logs ($\ln GDP_{i,t} = \ln GDPpc_{i,t} + \ln Pop_{i,t}$) and cannot be used together as independent regressors. We instead retain two independent economic variables,
\[
X_{i,t} = (GDP_{i,t},\, Pop_{i,t}),
\]
national economic size and population (footballing player pool); $GDPpc_{i,t} = GDP_{i,t}/Pop_{i,t}$ remains available as a derived quantity wherever needed.

\subsection{Sporting production function}
We define a production function mapping economic inputs into potential sporting performance:
\begin{equation}
f(X_{i,t}) = A \cdot GDP_{i,t}^{\alpha_1} \cdot Pop_{i,t}^{\alpha_2},
\end{equation}
where $A>0$ and $\alpha_1,\alpha_2>0$ are parameters. This Cobb--Douglas specification captures the idea that economic capacity determines the \emph{potential} for sporting performance, without guaranteeing it.

\subsection{Stochastic performance equation}\label{sec:perf-eq}
Observed performance is given by
\begin{equation}
Y_{i,t} = \theta_{i,t} f(X_{i,t}) + \varepsilon_{i,t},
\end{equation}
where $\theta_{i,t}$ is a latent efficiency variable and $\varepsilon_{i,t}\sim\mathcal N(0,\sigma_\varepsilon^2)$ represents measurement/match-day shocks. As made precise in \Cref{sec:sfa-dea,sec:circularity} below, the empirical counterpart of $\theta_{i,t}$ that we estimate in \Cref{sec:results} is a \emph{conditional} performance residual -- performance net of the GDP- and population-based potential $f(X_{i,t})$ -- and not a structurally identified efficiency term in the sense of the stochastic frontier literature.

\subsection{Relation to stochastic frontier analysis and DEA}\label{sec:sfa-dea}
Measuring ``efficiency'' as the capacity to transform resources into results is precisely the object of stochastic frontier analysis \citep[SFA;][]{aigner1977} and of data envelopment analysis \citep[DEA;][]{charnes1978}, both of which are widely used in sports economics. It is therefore important to state clearly how the present framework relates to these standard tools, and what it adds.

\textbf{What SFA and DEA do.} SFA estimates a composed-error model $y_i = x_i\beta + v_i - u_i$, where $v_i$ is two-sided noise and $u_i\geq 0$ is a one-sided inefficiency term with an assumed distribution (half-normal, exponential, truncated normal), separately identified from noise via maximum likelihood. DEA constructs a non-parametric, piecewise-linear best-practice frontier by linear programming and measures each unit's distance to that frontier, with no distributional assumption but no explicit noise term either. Both methods are \emph{static}: they deliver a single efficiency score (or ranking) per cross-sectional unit at a point in time, estimated from a snapshot of inputs and outputs.

\textbf{What the residual in this paper is -- and is not.} The empirical residual $z_i$ that we estimate by OLS in \Cref{sec:results} (the log of observed performance net of the fitted GDP/population potential) is a \emph{symmetric} residual, not a one-sided SFA inefficiency term: it can be positive or negative, it embeds ordinary measurement noise together with any true efficiency component, and no distributional assumption separates the two. It is, in this precise sense, closer to a corrected-OLS (COLS) shift of the fitted frontier than to an SFA-consistent decomposition, and considerably simpler than DEA. We are explicit about this throughout the paper (see \Cref{sec:circularity}), and we refer to $z_i$ as a \emph{conditional (log-)efficiency residual} -- performance conditional on GDP and population -- rather than as a structurally identified efficiency parameter.

\textbf{What the HJB/control framework adds.} The contribution of this paper is not a better \emph{static} efficiency estimator than SFA or DEA. It is a framework for what happens \emph{after} an efficiency level -- however estimated -- is observed: a stochastic control problem in which a federation chooses a time path of investment $u_t$ to move a stochastic, mean-reverting efficiency state $\theta_t$ toward a higher long-run level, subject to a quadratic adjustment cost and a bounded budget, deriving a normative, forward-looking optimal investment rule (\Cref{sec:control}) rather than a backward-looking efficiency score. SFA and DEA answer ``how efficient was unit $i$ last period, relative to the frontier?''; the control problem answers ``given a current efficiency state (however measured), how much should be invested, and how does that investment interact with mean reversion and volatility over time?''. The two are complementary rather than competing: a natural extension, noted in \Cref{sec:conclusion}, is to feed a properly identified SFA (or panel COLS) efficiency estimate into the state $z_{i,0}$ of the dynamic model, in place of the simple cross-sectional OLS residual used here for illustration.

\section{Efficiency dynamics and stochastic control}\label{sec:control}
\subsection{Log-efficiency process with control}
Let $u_{i,t}\geq 0$ denote investment in football development (training, infrastructure, governance). We specify the controlled log-efficiency process directly, which guarantees $\theta_{i,t}>0$ pathwise and nests the uncontrolled, mean-reverting benchmark case $u_{i,t}\equiv 0$:
\begin{equation}
\label{eq:sde}
dz_{i,t} = \big[\kappa_i(\bar z_i - z_{i,t}) + \gamma_i u_{i,t}\big]\,dt + \sigma_i\, dW_{i,t}, \qquad \theta_{i,t} = e^{z_{i,t}},
\end{equation}
with $\kappa_i>0$ the speed of mean reversion, $\bar z_i$ the long-run (structural) log-efficiency level, $\sigma_i>0$ volatility, and $\gamma_i>0$ converting investment into log-efficiency growth. Working with the process for $z_{i,t}=\ln\theta_{i,t}$ directly -- rather than specifying dynamics for $\theta_{i,t}$ and separately for $z_{i,t}$ -- avoids any risk of specifying two inconsistent laws of motion for the same object, and $\theta_{i,t}=e^{z_{i,t}}>0$ holds by construction for every admissible control path and every realization of $W$.

\subsection{Discounted objective function}
Each country's federation chooses an investment path to maximize expected discounted net performance over a horizon $[0,T]$:
\begin{equation}
\label{eq:objective}
\max_{u_{i,\cdot}} \; \mathbb E\left[\int_0^T e^{-\rho t}\Big(\theta_{i,t} f(X_{i,t}) - \tfrac{c_i}{2} u_{i,t}^2\Big)\, dt\right] \quad \text{subject to \Cref{eq:sde}},
\end{equation}
where $\rho\geq 0$ is a discount rate reflecting the federation's time preference (a standard, technically inoffensive addition to the objective; $\rho=0$ recovers the undiscounted case used in some of the numerical illustrations below). The quadratic cost $\tfrac{c_i}{2}u_{i,t}^2$ reflects diminishing returns/increasing marginal cost of investment (absorptive capacity, governance bottlenecks). For tractability we treat $f(X_{i,t})=f_{i,t}$ as slowly varying relative to the football-specific efficiency shocks and take it as given over the optimization horizon; a fully joint dynamic treatment of $X_{i,t}$ is left for future work.

\subsection{Hamilton--Jacobi--Bellman equation}
Let $V(z,t)$ denote the value function for a representative country (subscripts $i$ suppressed), defined as the supremum in \Cref{eq:objective} over admissible controls starting from $z_t=z$. Writing $W(z,t) := e^{\rho t}V(z,t)$ to absorb the discount factor into a time-inhomogeneous but discount-free PDE, dynamic programming gives
\begin{equation}
\label{eq:hjb}
\rho W - W_t = \max_{u\geq 0}\Big[e^z f_t - \tfrac{c}{2}u^2 + \big(\kappa(\bar z - z) + \gamma u\big)W_z + \tfrac{\sigma^2}{2}W_{zz}\Big], \qquad W(z,T)=0.
\end{equation}
The first-order condition with respect to $u$ gives the \emph{unconstrained} optimum $u^{\mathrm{unc}} = (\gamma/c)\,W_z$: optimal investment is proportional to the shadow price (marginal value) of log-efficiency. Since investment cannot be negative and is realistically bounded by a maximum feasible annual budget $u_{\max}$, the admissible control set is $u\in[0,u_{\max}]$, and the optimal policy is
\begin{equation}
\label{eq:optimal-policy}
u^*(z,t) = \mathrm{clip}\Big(\tfrac{\gamma}{c} W_z(z,t),\ 0,\ u_{\max}\Big).
\end{equation}

\subsection{Well-posedness: why a budget cap is needed}
\Cref{eq:hjb} contains a reward term $e^z f_t$ that is convex and unbounded in $z$, together with a control benefit $\gamma u W_z$ that is only quadratically penalized. If $u$ is left unconstrained, the feedback $u^{\mathrm{unc}}=(\gamma/c)W_z$ can itself grow, which in turn grows $W_z$ because the state is pushed toward larger $z$ where the exponential reward is larger still; in numerical experiments (\Cref{sec:numerics}) the unconstrained scheme indeed diverges. Capping $u$ at a finite $u_{\max}$ removes this positive feedback loop and yields a bounded value function. Economically, this is not an ad hoc fix: real federations face hard budget constraints, so $u\in[0,u_{\max}]$ is the natural admissible set, and \Cref{eq:hjb,eq:optimal-policy} should be understood with this constraint throughout.

\subsection{A verification theorem}\label{sec:verification}
The argument above establishes \emph{why} a bound on $u$ is needed for well-posedness, but it is heuristic. We now state the precise sense in which a smooth solution of the constrained HJB equation \eqref{eq:hjb} coincides with the value function, and in which the maximizer \eqref{eq:optimal-policy} is indeed optimal.

\begin{assumption}
\label{ass:growth}
$W\in C^{1,2}([0,T]\times\mathbb R)\cap C([0,T]\times\mathbb R)$ solves \Cref{eq:hjb}, and there exist constants $C,\lambda>0$ such that, uniformly in $t\in[0,T]$,
\[
|W(z,t)| + |W_z(z,t)| \le C\big(1+e^{\lambda |z|}\big).
\]
\end{assumption}

\begin{theorem}[Verification]
\label{thm:verification}
Under \Cref{ass:growth}, for every admissible control $u_\cdot$ valued in $[0,u_{\max}]$ and every initial state $z_0=z$,
\[
V(z,0) := \sup_{u_\cdot} \mathbb E\left[\int_0^T e^{-\rho t}\Big(\theta_t f_t - \tfrac{c}{2}u_t^2\Big)dt\right] = e^{-\rho\cdot 0}W(z,0) = W(z,0),
\]
and the Markov feedback control $u^*(z,t)$ defined in \Cref{eq:optimal-policy} is optimal.
\end{theorem}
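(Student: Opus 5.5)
We follow the classical verification argument: apply Itô's formula to the discounted process $e^{-\rho s}W(z_s,s)$ along an arbitrary admissible control, use the HJB inequality to bound the drift, and control the local martingale through the growth condition in \Cref{ass:growth}. Fix $z$ and an admissible (progressively measurable) control $u_\cdot$ with values in $[0,u_{\max}]$, and let $z_s$ solve \Cref{eq:sde}. Since the drift is affine in $z$ and $u$ is bounded, a strong solution exists. The first preliminary step is a moment bound of the form
\[
\sup_{u_\cdot}\ \mathbb E\Big[\sup_{s\le T} e^{p|z_s|}\Big] < \infty \qquad \text{for every } p>0 .
\]
To obtain it, write $z_s = e^{-\kappa s}z + \int_0^s e^{-\kappa(s-r)}(\kappa\bar z+\gamma u_r)\,dr + \sigma\int_0^s e^{-\kappa(s-r)}dW_r$. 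The first two terms are bounded by a deterministic constant depending only on $z$, $T$ and $u_{\max}$. The last term is a Gaussian process with bounded variance, so its running supremum has Gaussian tails (Doob/BDG or Fernique), which gives all exponential moments.

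Next, set $M_s = e^{-\rho s}W(z_s,s)$. Itô's formula gives
\[
dM_s = e^{-\rho s}\big[-\rho W + W_t + (\kappa(\bar z - z_s)+\gamma u_s)W_z + \tfrac{\sigma^2}{2}W_{zz}\big]ds + e^{-\rho s}\sigma W_z\,dW_s .
\]
By \Cref{eq:hjb}, the bracket is at most $-(e^{z_s}f - \tfrac c2 u_s^2)$ for any $u_s\in[0,u_{\max}]$, with equality when $u_s=u^*(z_s,s)$. Integrating from $0$ to $T$, using $W(\cdot,T)=0$, and taking expectations gives
\[
W(z,0) \ge \mathbb E\int_0^T e^{-\rho s}\big(e^{z_s}f-\tfrac c2u_s^2\big)ds ,
\]
provided the stochastic integral has zero mean. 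This holds because $\mathbb E\int_0^T |W_z(z_s,s)|^2ds \le C'\,\mathbb E\int_0^T(1+e^{2\lambda|z_s|})ds<\infty$ by the moment bound, so the integral is a true martingale. The same moment bound (with $p=1$) makes the running reward integrable, so all the expectations are finite. Taking the supremum over controls yields $V(z,0)\le W(z,0)$.

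For the reverse inequality we use the feedback $u^*$. Since $W_z$ is continuous, $u^*(z,t)=\mathrm{clip}((\gamma/c)W_z,0,u_{\max})$ is bounded and continuous. Local Lipschitz continuity, needed for strong existence of the closed-loop SDE, is not immediate, because $W\in C^{1,2}$ only makes $W_z$ differentiable in $z$ with continuous derivative. It does follow, however, since $W_{zz}$ is continuous, so $W_z$ is locally Lipschitz in $z$ and the clip is $1$-Lipschitz. The drift of the closed-loop SDE is therefore locally Lipschitz with linear growth (bounded control), so a unique strong solution exists, and $u^*_s:=u^*(z^*_s,s)$ is admissible. Along it, the HJB inequality is an equality, and the same martingale argument gives $W(z,0)$ equal to the payoff of $u^*$, hence $V(z,0)\ge W(z,0)$ and $u^*$ is optimal.

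The main obstacle is the integrability step: the reward $e^{z}f$ and the growth bound $e^{\lambda|z|}$ are exponential, so the usual polynomial-growth verification does not apply directly. The uniform exponential moment bound, which relies on the bounded control set $[0,u_{\max}]$, is exactly what replaces it. This is where the budget cap is used rigorously, and we would take care to make that bound explicit. If one prefers to avoid asserting the martingale property outright, the fallback is to localize with exit times $\tau_n$ from $[-n,n]$ and pass to the limit $n\to\infty$ by dominated convergence, using the same moment bound.
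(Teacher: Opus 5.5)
Your proposal is correct and follows the same route as the paper: It\^o's formula applied to $e^{-\rho s}W(z_s,s)$, the HJB inequality with equality at the clipped maximizer $u^*$, and a uniform exponential moment bound (which relies on the bounded control set) that makes the stochastic integral a true martingale and the running reward integrable. Two of your steps make explicit what the paper leaves to a brief comparison/Gronwall remark and to the cited references: the variation-of-constants derivation of the moment bound, and the check that the closed-loop SDE under $u^*$ is well posed, which uses local Lipschitz continuity from the continuity of $W_{zz}$ together with the $1$-Lipschitz clip.
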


\begin{proof}
Because the admissible drift $b(z,u)=\kappa(\bar z-z)+\gamma u$ is affine in $z$ and Lipschitz, uniformly bounded in $u\in[0,u_{\max}]$ by $|b(z,u)|\le \kappa|\bar z-z| + \gamma u_{\max}$, and the diffusion coefficient $\sigma$ is constant, the controlled SDE \eqref{eq:sde} admits, for every admissible $u_\cdot$, a unique strong solution on $[0,T]$. A standard comparison/Gronwall argument (comparing $z_t$ pathwise to the uncontrolled Ornstein--Uhlenbeck process shifted by the bounded drift term $\gamma u_{\max}$) yields, for every $p>0$,
\[
\mathbb E\Big[\sup_{t\le T} e^{p|z_t|}\Big] < \infty,
\]
uniformly over admissible controls. This moment bound, together with \Cref{ass:growth}, guarantees (i) that $\mathbb E[e^z_t f_t]<\infty$ so the objective in \Cref{eq:objective} is well defined for every admissible control, and (ii) that the local martingale arising from applying It\^o's formula to $e^{-\rho t}W(z_t,t)$ along an admissible path is a true martingale (its quadratic variation has finite expectation by Cauchy--Schwarz and the moment bound). Applying It\^o's formula to $e^{-\rho t}W(z_t,t)$, using that $W$ solves \Cref{eq:hjb} so that the drift term inside the expectation is bounded above by the running reward for every admissible $u$ (with equality at $u=u^*$), and taking expectations, gives $W(z,0) \ge \mathbb E[\int_0^T e^{-\rho t}(\theta_t f_t - \tfrac{c}{2}u_t^2)dt]$ for every admissible $u_\cdot$, with equality when $u_t = u^*(z_t,t)$. This is the standard verification argument for controlled diffusions with unbounded (here, exponential) running reward and a bounded control set; see \citet{flemingsoner2006} (Chapter IV) or \citet{pham2009} (Theorem 3.5.2) for the general statement and the precise integrability conditions used here. This is the same verification logic that underlies the classical solution of Merton's continuous-time portfolio problem \citep{merton1971}, adapted here to a bounded control set and an exponential (rather than power or logarithmic) running reward.
\end{proof}

\begin{remark}
Existence of a classical solution $W$ satisfying \Cref{ass:growth} is a separate question from verification. Because $u^{\mathrm{unc}}=(\gamma/c)W_z$ is clipped to $[0,u_{\max}]$, the nonlinearity in \Cref{eq:hjb} is globally Lipschitz in $W_z$ (the clip function is 1-Lipschitz and bounded), which is precisely what removes the blow-up mechanism discussed above and places \Cref{eq:hjb} within the class of semilinear parabolic PDEs with linear-growth nonlinearity for which classical existence results are available (via a Picard/fixed-point argument on the clipped equation, or via the stochastic representation of \citet{pham2009}, Chapter 4). We do not repeat that construction here and instead verify the growth condition numerically in \Cref{sec:numerics}, where the solution is bounded on the computational domain and stable under grid refinement.
\end{remark}

\subsection{Interpretation}
\begin{itemize}
\item Investment increases the drift of log-efficiency, hence (in expectation) future performance.
\item Because performance is exponential (multiplicative) in $z$, the marginal value of investment $W_z$ -- and hence optimal investment itself -- is generically \emph{increasing} in current efficiency (\Cref{sec:numerics}): efficiency and investment reinforce each other, consistent with cumulative-advantage dynamics in institutional quality.
\item Costs and the budget cap limit excessive investment.
\item Uncertainty ($\sigma$) affects the dispersion, not the mean-reversion target, of outcomes.
\end{itemize}

This is a finite-horizon stochastic optimal control problem with a nonlinear (quadratic in $W_z$) HJB equation; a closed-form solution is not available in general because the exponential forcing term and the linear-in-$z$ drift term do not close under a low-dimensional ansatz. Analytical approximation techniques such as those of \citet{dixit1991} (local quadratic expansions around $\bar z$) could deliver an approximate closed form for future work; here we solve \Cref{eq:hjb} numerically (\Cref{sec:numerics}).

\section{Estimation strategy}\label{sec:estimation}
\subsection{Step 1: estimating the production function}
Taking logs of the performance equation (using the two independent economic regressors of \Cref{sec:methodology}):
\begin{equation}
\label{eq:logreg}
\ln Y_{i,t} = \ln A + \alpha_1 \ln GDP_{i,t} + \alpha_2 \ln Pop_{i,t} + z_{i,t} + \varepsilon_{i,t}.
\end{equation}
With panel data, estimation can be performed using panel regression, fixed/random effects, or Bayesian hierarchical models. \Cref{sec:results} reports a simple cross-sectional instance of this regression (a single time period, $N=24$ countries), which already identifies $\alpha_1,\alpha_2$ and the implied efficiency residuals $z_i$, though not the dynamic parameters ($\kappa_i,\sigma_i$), for which a genuine panel (Step 2 below) is required. Because $N=24$ is small, we report heteroskedasticity-robust standard errors (HC1/HC3) rather than relying on the classical homoskedastic formula, and we test the joint hypothesis $H_0:\alpha_1=\alpha_2=0$ explicitly rather than reading significance off the point estimates alone (\Cref{sec:results}).

\subsection{Step 2: state-space representation}
The model can be written in state-space form. Observation equation:
\[
\ln Y_{i,t} = X_{i,t}\beta + z_{i,t} + \varepsilon_{i,t}.
\]
State equation (Euler discretization of \Cref{eq:sde} with step $\Delta$):
\[
z_{i,t+\Delta} = z_{i,t} + \kappa_i(\bar z_i - z_{i,t})\Delta + \gamma_i u_{i,t}\Delta + \eta_{i,t}, \qquad \eta_{i,t}\sim\mathcal N(0,\sigma_i^2\Delta).
\]
This linear-Gaussian state-space form is estimable by the Kalman filter and maximum likelihood \citep{hamilton1994}, or, in nonlinear/non-Gaussian extensions, by particle filtering.

\subsection{Step 3: efficiency measurement}
Efficiency is recovered as $\theta_{i,t}=e^{z_{i,t}}$, providing a ranking of countries based on performance relative to economic capacity -- an empirically implementable counterpart to the theoretical efficiency measure used throughout the model, subject to the caveat of \Cref{sec:sfa-dea,sec:circularity} that this residual is not a structurally identified, one-sided efficiency term in the SFA sense.

\section{Numerical simulations and results}\label{sec:numerics}
To illustrate the model's behavior we solve \Cref{eq:hjb} numerically for a representative country and simulate the resulting controlled process, then compare two synthetic countries that differ in GDP-based potential $f$ and in structural efficiency parameters $(\kappa,\bar z)$. All parameters in this section are illustrative (not calibrated to data) and chosen to satisfy the stability requirements of the numerical scheme; \Cref{sec:results} then applies the model to real cross-sectional data.

\subsection{Solution method}
\Cref{eq:hjb} is solved by explicit finite-difference backward induction on a grid $z\in[-2.5,2.5]$ (201 points) and $t\in[0,T]$ (6000 steps), with $u_{\max}=1$. Central differences approximate $W_z, W_{zz}$; the time step satisfies the parabolic stability condition $\Delta t \le \tfrac12 \Delta z^2/\sigma^2$. Given the resulting policy grid $u^*(z,t)$, sample paths $z_t$ (and hence $\theta_t=e^{z_t}$) are simulated by Euler--Maruyama, with $u^*$ obtained by interpolation on the grid. Baseline parameters: $\kappa=0.8$, $\bar z=0$, $\sigma=0.35$, $\gamma=0.6$, $c=1$, $f=1$, $\rho=0.03$, $T=5$ years. \\ 

\noindent {\bf Grid-refinement convergence study.} \\
The explicit scheme used above is a monotone, consistent, and $L^\infty$-stable approximation of \Cref{eq:hjb} whenever two mesh conditions hold: (i) the parabolic CFL condition $\Delta t \le \Delta z^2/\sigma^2$ (imposed here with a safety factor, $\Delta t\le \tfrac12\Delta z^2/\sigma^2$, so that the central coefficient of the scheme stays nonnegative), and (ii) the mesh ratio condition $\Delta z \le \sigma^2/\max_z|b(z,u)|$, with $\max_z|b(z,u)| = \kappa(z_{\max}-z_{\min})/2 + \gamma u_{\max} = 2.6$ on our domain, which ensures the off-diagonal coefficients associated with the (centrally differenced) drift term remain nonnegative. Both conditions hold comfortably at the baseline resolution ($\Delta z=0.025 \ll \sigma^2/2.6 \approx 0.047$). By the Barles--Souganidis theorem \citep{barlessouganidis1991}, a monotone, consistent, $L^\infty$-stable scheme converges (locally uniformly) to the unique viscosity solution of \Cref{eq:hjb} as $\Delta z,\Delta t\to 0$.

We verify this empirically with a grid-refinement study, halving $\Delta z$ successively from $0.10$ to $0.0031$ (six resolutions), each time refining $\Delta t$ proportionally to $\Delta z^2$ to preserve the same stability margin. \Cref{tab:convergence} reports $W(0,0)$ (the value function at the long-run efficiency level, $t=0$) and the successive sup-norm differences on a common reference grid.

\begin{table}[h]
\centering
\caption{Grid-refinement convergence of the finite-difference HJB solution.}
\label{tab:convergence}
\begin{tabular}{rrrrll}
\toprule
$N_z$ & $N_t$ & $\Delta z$ & $\Delta t$ & $W(0,0)$ & $\max|W_h-W_{h/2}|$ (reduction factor) \\
\midrule
51    & 372    & 0.1000  & $1.34\times10^{-2}$ & 6.394431 & -- \\
101   & 1{,}485  & 0.0500  & $3.37\times10^{-3}$ & 6.392110 & $2.386\times10^{-3}$ \\
201   & 5{,}940  & 0.0250  & $8.42\times10^{-4}$ & 6.391532 & $5.956\times10^{-4}$ ($4.01\times$) \\
401   & 23{,}758 & 0.0125  & $2.10\times10^{-4}$ & 6.391388 & $1.488\times10^{-4}$ ($4.00\times$) \\
801   & 95{,}031 & 0.00625 & $5.26\times10^{-5}$ & 6.391351 & $3.718\times10^{-5}$ ($4.00\times$) \\
1601  & 380{,}122 & 0.00313 & $1.32\times10^{-5}$ & 6.391342 & $9.295\times10^{-6}$ ($4.00\times$) \\
\bottomrule
\end{tabular}
\end{table}

The successive differences shrink by a factor of almost exactly $4$ at each halving of $\Delta z$ (with $\Delta t\propto \Delta z^2$), consistent with the scheme's expected first-order-in-time, second-order-in-space local truncation error, and confirm convergence of $W(\cdot,0)$ to a stable limit well before the baseline resolution ($N_z=201$) used elsewhere in this paper -- whose value already agrees with the finest grid to four significant digits. This is the empirical counterpart, at our chosen parameter values, of the Barles--Souganidis guarantee stated above.

\subsection{Optimal investment policy}
\Cref{fig:policy} plots $u^*(z,t)$ at several dates. Two features stand out. First, at any given date, $u^*$ is increasing in $z$: because performance is exponential in log-efficiency, the marginal value of raising efficiency further is itself larger when efficiency is already high, so optimal investment is directed disproportionately toward already-efficient federations -- a form of increasing returns to institutional investment, rather than the ``catch-up'' pattern one might have guessed a priori. Second, $u^*$ declines toward zero as $t\to T$: with little time left to capture the benefits of higher efficiency, investment is not worthwhile close to the terminal date, the usual turnpike/vanishing-effort feature of finite-horizon control problems.

\begin{figure}[h!]
\centering
\includegraphics[width=0.65\linewidth]{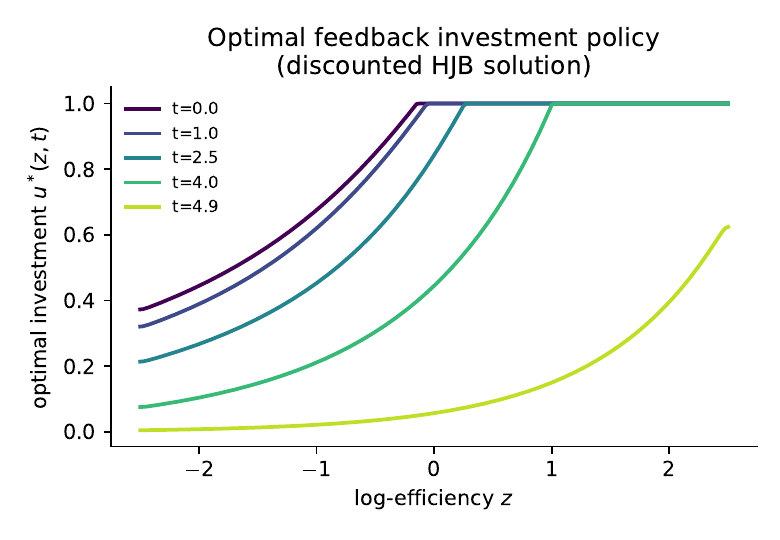}
\caption{Optimal feedback investment policy $u^*(z,t)$, obtained from the finite-difference solution of the discounted HJB equation \Cref{eq:hjb,eq:optimal-policy}, at five dates over the horizon $T=5$.}
\label{fig:policy}
\end{figure}

\subsection{Simulated efficiency paths}
\Cref{fig:paths} shows twelve simulated sample paths of $\theta_t=e^{z_t}$ under the optimal control, together with the cross-sectional mean. Efficiency drifts upward from $\theta_0=1$ and fluctuates around an elevated level as investment counteracts mean reversion toward $\bar z=0$, consistent with the policy of \Cref{fig:policy}.

\begin{figure}[h!]
\centering
\includegraphics[width=0.65\linewidth]{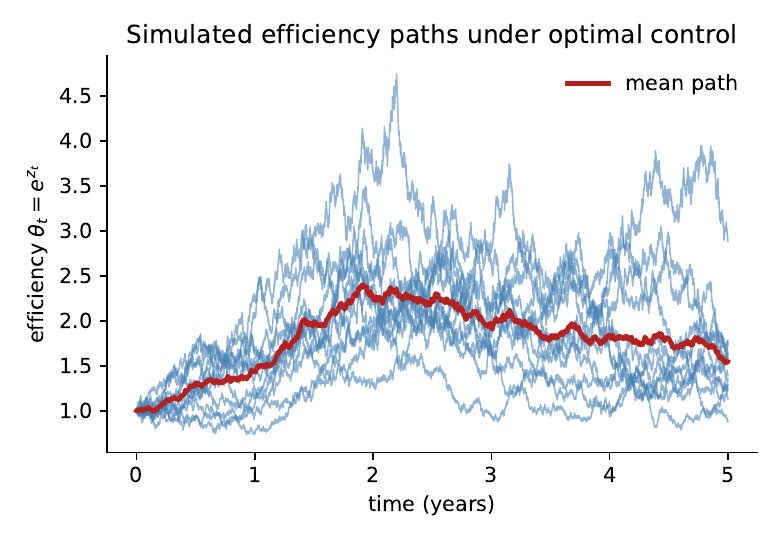}
\caption{Simulated sample paths of efficiency $\theta_t=e^{z_t}$ under the optimal feedback control (12 paths, thin blue; cross-sectional mean, thick red).}
\label{fig:paths}
\end{figure}

\subsection{Value of optimal control}
\Cref{tab:value,fig:value} compare the expected discounted net objective \Cref{eq:objective} (Monte Carlo, $N=2000$ paths) under three policies: no investment, a constant investment $u=0.4$, and the optimal feedback policy $u^*(z,t)$. The optimal control policy strictly dominates both static alternatives, confirming numerically that the HJB solution characterizes a genuine improvement over naive investment rules, not merely a theoretical artifact.

\begin{table}[h]
\centering
\caption{Expected discounted net objective over $T=5$ years, discount rate $\rho=0.03$, Monte Carlo estimates ($N=2000$ paths, standard error in parentheses).}
\label{tab:value}
\begin{tabular}{lr}
\toprule
Policy & Expected net objective \\
\midrule
No investment ($u=0$)     & 4.798 (0.017) \\
Constant investment ($u=0.4$) & 5.648 (0.022) \\
Optimal control $u^*(z,t)$ & \textbf{6.386} (0.031) \\
\bottomrule
\end{tabular}
\end{table}

\begin{figure}[h]
\centering
\includegraphics[width=0.65\linewidth]{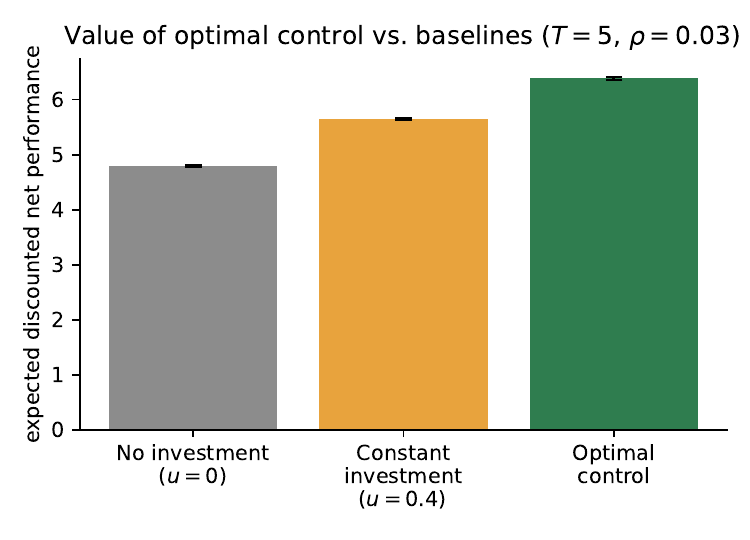}
\caption{Expected discounted net performance under the three investment policies of \Cref{tab:value}.}
\label{fig:value}
\end{figure}

\subsection{GDP versus efficiency: a cross-country illustration}

To illustrate the paper's central empirical motivation directly, we simulate two synthetic countries, each starting from its own long-run efficiency level ($z_0=\bar z$) and receiving the same constant investment $u=0.25$ (this comparison is purely illustrative of the mechanism and is not itself a discounted optimization exercise):
\begin{itemize}
\item \textbf{Country A} (high GDP, low efficiency): $f=1.6$, $\kappa=0.3$, $\bar z=-0.5$;
\item \textbf{Country B} (moderate GDP, high efficiency): $f=1.0$, $\kappa=0.9$, $\bar z=0.35$.
\end{itemize}
Despite a 60\% larger GDP-based potential, Country A's mean cumulative performance over $T=5$ is $6.70$, against $8.34$ for Country B -- a $24.5\%$ performance gap in favor of the less wealthy, more efficient country. \Cref{fig:countryab} plots the expected performance paths: the efficiency gap dominates the GDP gap throughout the horizon, providing a concrete numerical counterpart to the AFCON pattern that motivates the paper.

\begin{figure}[h!]
\centering
\includegraphics[width=0.65\linewidth]{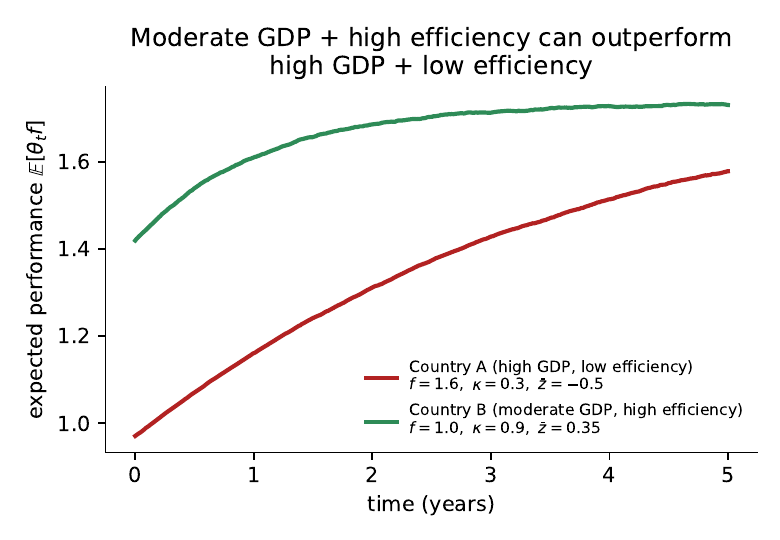}
\caption{Expected performance $\mathbb E[\theta_t f]$ for two synthetic countries differing in GDP-based potential and structural efficiency.}
\label{fig:countryab}
\end{figure}

\section{Empirical results and discussion}\label{sec:results}
\subsection{A 24-country cross-section: AFCON 2025 (Morocco)}
We now apply the model to the 24 national teams that qualified for AFCON 2025, using real data: nominal GDP and GDP per capita from the IMF World Economic Outlook database (April 2026 edition),\footnote{International Monetary Fund, World Economic Outlook database, April 2026 edition, \url{https://www.imf.org/en/Publications/WEO}. Population is computed as $GDP/GDPpc$.} and FIFA Men's World Ranking points for the CAF zone, ranking dated 20 July 2026,\footnote{FIFA/CAF World Ranking, \url{https://www.fifa.com/fifa-world-ranking}, ranking date 20 July 2026.} used as a continuously scaled, currently available proxy for footballing performance $Y_i$ in the absence of a bespoke AFCON points panel. The full dataset (GDP, population, FIFA points, estimated residuals, and code reproducing every table and figure in this paper) is provided as a reproducible supplementary file accompanying this submission.

\subsubsection{Cross-sectional estimation and statistical inference} \label{sec:inference}
We estimate, by OLS on this single cross-section of $N=24$ countries,
\[
\ln(\mathrm{FIFA}_i) = \ln A + \alpha_1 \ln GDP_i + \alpha_2 \ln Pop_i + z_i,
\]
using the corrected, non-collinear specification of \Cref{sec:methodology}. \Cref{tab:regression} reports the estimated coefficients together with heteroskedasticity-robust (HC3) standard errors, appropriate given the small sample ($N=24$).

\begin{table}[h]
\centering
\caption{Cross-sectional OLS estimates, $\ln(\mathrm{FIFA}_i)$ on $\ln GDP_i$ and $\ln Pop_i$ ($N=24$). HC3 robust standard errors in parentheses.}
\label{tab:regression}
\begin{tabular}{lrrrr}
\toprule
 & Coefficient & Robust s.e. & $t$ & $p$-value \\
\midrule
Constant ($\ln A$) & 6.9526 & 0.0898 & 77.41 & $<0.001$ \\
$\ln GDP_i$ ($\alpha_1$) & 0.0606 & 0.0336 & 1.80 & 0.072 \\
$\ln Pop_i$ ($\alpha_2$) & 0.0063 & 0.0293 & 0.21 & 0.831 \\
\midrule
\multicolumn{5}{l}{$R^2 = 0.347$, adjusted $R^2 = 0.285$} \\
\multicolumn{5}{l}{Joint test $H_0: \alpha_1=\alpha_2=0$: $F(2,21)=5.17$, $p=0.015$} \\
\bottomrule
\end{tabular}
\end{table}

Three findings temper a purely mechanical reading of the elasticities. First, the individual GDP elasticity $\hat\alpha_1=0.061$ is only marginally significant at conventional levels ($p=0.072$ with HC3 standard errors), and the population elasticity $\hat\alpha_2=0.006$ is statistically indistinguishable from zero ($p=0.831$); taken individually, neither coefficient offers strong evidence of a GDP or population effect on FIFA points. Second, the \emph{joint} test $H_0:\alpha_1=\alpha_2=0$ is nonetheless rejected at the 5\% level ($F=5.17$, $p=0.015$): GDP and population together explain a statistically significant, if modest, share of cross-country variation in footballing performance. Third, the fit is modest in an absolute sense ($R^2=0.35$): GDP and population together explain about a third of the cross-country variation in FIFA points, leaving the bulk to the residual $z_i = \ln(\mathrm{FIFA}_i) - \widehat{\ln f}(GDP_i,Pop_i)$. Rather than treating this residual as direct evidence of an efficiency channel, \Cref{sec:circularity} below quantifies exactly how much of that residual variation is, by construction, simply a repackaging of the raw FIFA ranking.

We take $\bar z_i = z_i$ as each country's current structural log-efficiency level and $f_i=\exp(\widehat{\ln f}(GDP_i,Pop_i))$ as its structural GDP/population-based potential, and simulate \Cref{eq:sde} forward from $z_{i,0}=\bar z_i$ with common structural parameters $\gamma=1$, $c=1$, and identical constant investment $u=0.2$ for every country over $T=5$ years. We flag explicitly that setting $\bar z_i=z_i$ -- i.e., treating each country's currently estimated residual as its long-run structural target -- is an \emph{identification assumption}, not an innocuous normalization: it is what allows us to simulate forward from a single cross-section without a panel estimate of $\bar z_i$, and it mechanically preserves each country's current relative standing rather than allowing the model to independently discover it. A genuine panel estimate of $\bar z_i$ (Step 2 of \Cref{sec:estimation}) is needed to relax this assumption; see \Cref{sec:conclusion}. \\

\textbf{An attempted calibration of $\kappa,\sigma$, and why it was not used.} We attempted to calibrate the shared dynamic parameters $\kappa$ (mean-reversion speed) and $\sigma$ (volatility) to real data, using verified AFCON results (podium finishes and quarter-finalists, cross-checked against official records) for the last four 24-team editions (2019, 2021, 2023, 2025). Identifying $\kappa,\sigma$ from an AR(1)/Ornstein--Uhlenbeck regression requires country-level observations in \emph{two or more consecutive} editions, but the great majority of the 24 countries have an independently verified result (top-4 finish or quarter-final exit) in only one of the four editions -- the remaining cells would have to be filled in from unverified recollection rather than sourced results. With only a handful of usable consecutive pairs, a regression-based $\hat\kappa,\hat\sigma$ would be dominated by sampling noise rather than signal, so we do not report one: a spurious point estimate presented as ``calibrated'' would be worse than being explicit about the limitation. Genuinely identifying $\kappa_i,\sigma_i$ requires the kind of multi-cycle panel (AFCON points \emph{and} time-varying GDP, several editions deep, for every country) that the Kalman-filter strategy of Step 2, \Cref{sec:estimation}, is built for; assembling that panel is future work. Here we instead run $\kappa,\sigma$ at an illustrative baseline ($\kappa=0.7,\sigma=0.30$) and directly check, below, how much the results depend on that choice.

\subsubsection{Quantifying circularity in the empirical exercise}
\label{sec:circularity}

With $\hat\alpha_1=0.061$ and $\hat\alpha_2=0.006$, the fitted production function is close to flat in the two economic regressors, so the residual $z_i$ is close to a re-centered version of $\ln(\mathrm{FIFA}_i)$. We quantify this explicitly rather than leave it implicit. By construction of the OLS residual, $\mathrm{Corr}(z_i,\ln(\mathrm{FIFA}_i)) = \sqrt{1-R^2}$; with $R^2=0.347$ this gives
\[
\mathrm{Corr}\big(z_i,\, \ln(\mathrm{FIFA}_i)\big) = \sqrt{1-0.347} = 0.808,
\]
which we also confirm numerically from the estimated residuals ($0.8082$). In other words, roughly $1-R^2\approx 65\%$ of the cross-country variance in log FIFA points is, mechanically, still present in the ``efficiency'' residual $z_i$. This is not a substantive empirical finding about football; it is an algebraic identity that holds for \emph{any} regression with this $R^2$, and it means that a ranking of countries by $z_i$ will necessarily look similar to a ranking by raw FIFA points. Consistent with this, the Spearman rank correlation between the country ranking implied by $z_i$ and the raw FIFA-points ranking is $0.734$ ($p<0.001$).

We draw two conclusions from this. First, statements of the form ``Senegal and Morocco outperform their GDP-implied potential'' should be read, and are reported throughout this paper, as \emph{conditional on the weak explanatory power of the fitted production function}: given how little of FIFA points GDP and population jointly explain ($R^2=0.35$), a claim that country $i$ has high $z_i$ is close to, though not identical to, the more mundane claim that country $i$ has a high FIFA ranking. Second, and more constructively, this is precisely why we relabel $z_i$ throughout this paper as a \emph{conditional} performance residual (performance conditional on GDP and population) rather than as a validated structural efficiency parameter (\Cref{sec:sfa-dea}): the residual almost certainly also absorbs omitted structural factors that are correlated with football performance but not modeled here -- diaspora and expatriate players, football tradition, institutional quality \citep{batarfireade2021}, and working-age male population (15--35) rather than total population. Because we do not have reliable cross-country data on these factors for the present sample, we do not add them as additional controls in this version of the paper; instead we are explicit about the limitation here and in \Cref{sec:conclusion}, and we requalify $z_i$ accordingly everywhere it is used, including in the abstract.

\subsection{Results}
\Cref{fig:residuals} ranks the 24 teams by $z_i$. Senegal ($z=0.221$) and Morocco ($z=0.207$) are the largest positive residuals -- both current AFCON champions/finalists, over-performing their GDP and population by a wide margin -- followed by Mali, C\^ote d'Ivoire, Comoros, and Cameroon. Zimbabwe ($z=-0.190$), Tanzania, Botswana, Sudan, and Angola are the largest negative residuals, with South Africa, despite having the single largest GDP in the sample (\$427bn), at $z=-0.065$ and ranking only 10th out of 24 in simulated expected performance.

\begin{figure}[h!]
\centering
\includegraphics[width=0.6\linewidth]{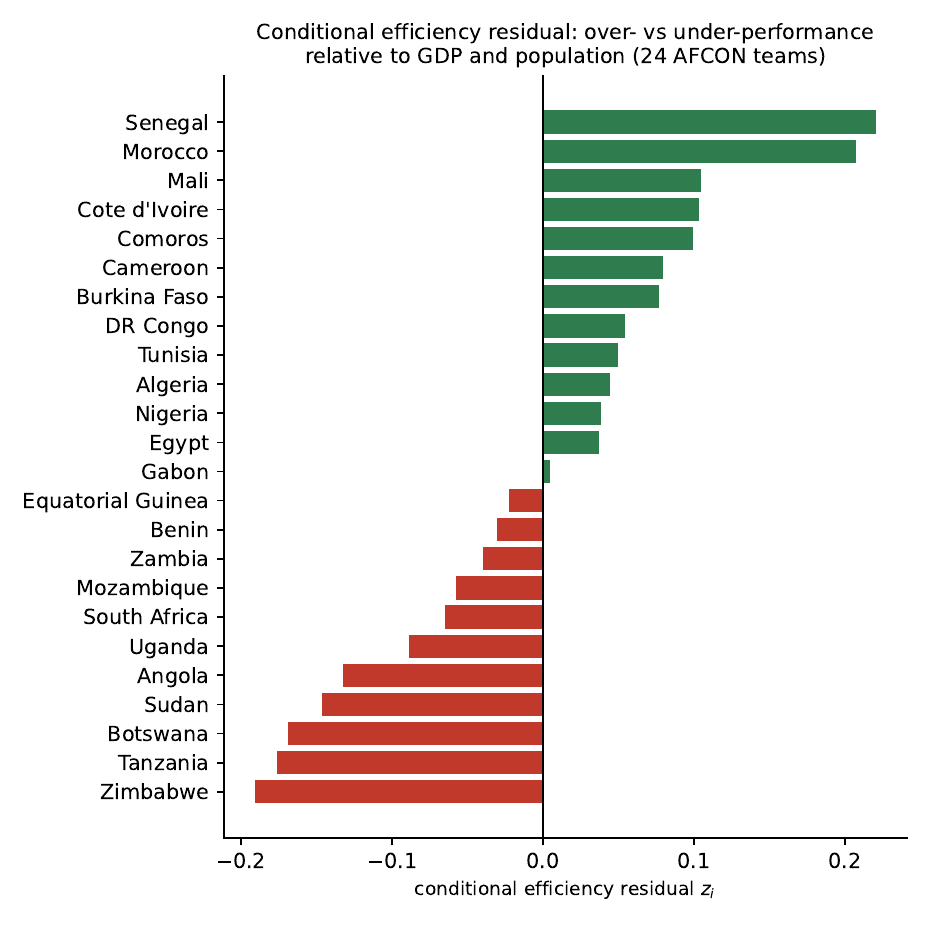}
\caption{Conditional efficiency residual $z_i$ for the 24 AFCON 2025 teams, from the cross-sectional regression of $\ln(\mathrm{FIFA}_i)$ on $\ln GDP_i$ and $\ln Pop_i$.}
\label{fig:residuals}
\end{figure}

\Cref{fig:scatter} plots $\ln GDP$ against $\ln(\mathrm{FIFA})$ directly, colored by $z_i$: countries above the fitted line (green) systematically translate economic capacity into results more effectively than those below it (red). \Cref{tab:table2} reports the full underlying data.

\begin{figure}[h!]
\centering
\includegraphics[width=0.65\linewidth]{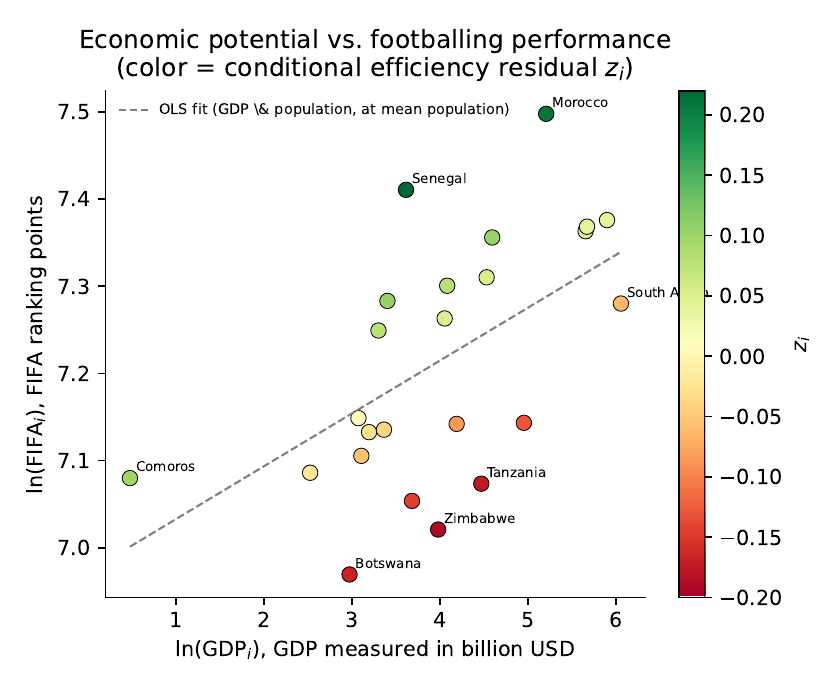}
\caption{GDP versus FIFA ranking points (logs) for the 24 AFCON 2025 teams, colored by the conditional efficiency residual $z_i$; dashed line is the OLS fit (evaluated at mean population).}
\label{fig:scatter}
\end{figure}

\Cref{fig:rankscatter} compares each country's GDP rank to its simulated-performance rank (baseline $\kappa=0.7,\sigma=0.30$, Monte Carlo, $N=8{,}000$ paths); the rank correlation is $0.62$ -- positive, as expected, but far from 1, and several countries reverse rank by ten places or more: Senegal moves from GDP rank 15 to performance rank 2, and Tanzania moves from GDP rank 9 to performance rank 21. This is, to our knowledge, the first time this specific theoretical mechanism has been confronted -- even in this simple, single-cross-section, illustrative way -- with real GDP, population, and footballing-strength data for the full AFCON 2025 field.

\begin{figure}[h!]
\centering
\includegraphics[width=0.65\linewidth]{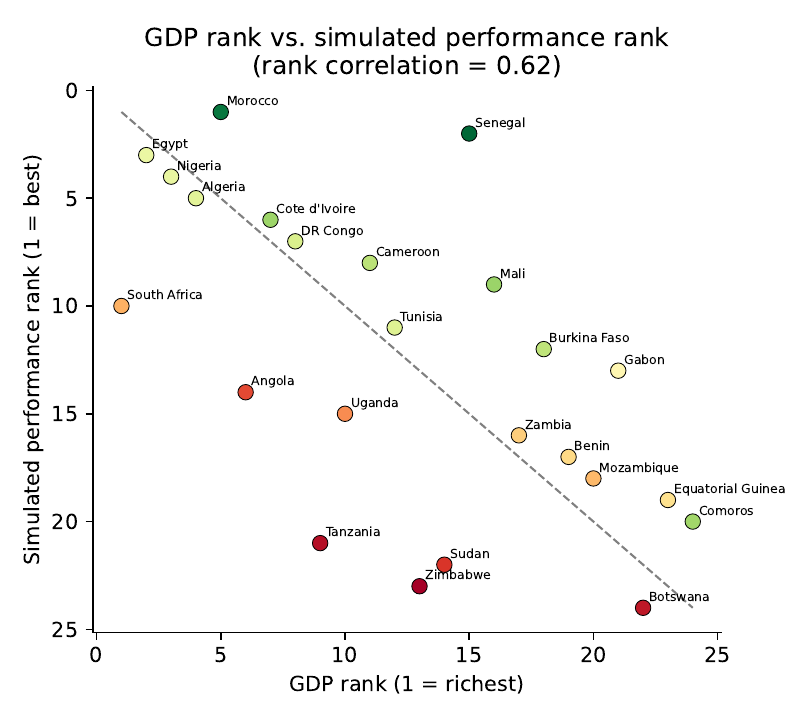}
\caption{GDP rank versus simulated expected-performance rank for the 24 AFCON 2025 teams (rank correlation $=0.62$); points below the diagonal over-perform their GDP rank.}
\label{fig:rankscatter}
\end{figure}

\begin{table}[h!]
\centering
\caption{Ranking: GDP vs.\ simulated performance ($T=5$, common $u=0.2$, baseline $\kappa=0.7,\sigma=0.30$).}
\label{tab:table2}
\small
\begin{tabular}{lrrrrrrr}
\toprule
Country & GDP (\$bn) & Pop (M) & FIFA & $z_i$ & GDP rk & Perf.\ rk & $\Delta$rk \\
\midrule
South Africa & 427.1 & 63.1 & 1451 & $-0.065$ & 1 & 10 & $-9$ \\
Egypt & 364.6 & 107.9 & 1597 & 0.037 & 2 & 3 & $-1$ \\
Nigeria & 290.5 & 237.5 & 1585 & 0.038 & 3 & 4 & $-1$ \\
Algeria & 285.7 & 47.3 & 1577 & 0.044 & 4 & 5 & $-1$ \\
Morocco & 182.6 & 37.7 & 1804 & 0.207 & 5 & 1 & $+4$ \\
Angola & 141.7 & 39.2 & 1266 & $-0.132$ & 6 & 14 & $-8$ \\
C\^ote d'Ivoire & 98.9 & 32.9 & 1566 & 0.103 & 7 & 6 & $+1$ \\
DR Congo & 92.8 & 106.6 & 1496 & 0.054 & 8 & 7 & $+1$ \\
Tanzania & 87.3 & 67.6 & 1180 & $-0.176$ & 9 & 21 & $-12$ \\
Uganda & 66.0 & 48.0 & 1264 & $-0.088$ & 10 & 15 & $-5$ \\
Cameroon & 59.3 & 29.9 & 1481 & 0.080 & 11 & 8 & $+3$ \\
Tunisia & 57.6 & 12.4 & 1427 & 0.049 & 12 & 11 & $+1$ \\
Zimbabwe & 53.5 & 17.4 & 1120 & $-0.190$ & 13 & 23 & $-10$ \\
Sudan & 39.7 & 50.4 & 1157 & $-0.146$ & 14 & 22 & $-8$ \\
Senegal & 37.1 & 19.2 & 1653 & 0.221 & 15 & 2 & $+13$ \\
Mali & 30.1 & 25.3 & 1456 & 0.104 & 16 & 9 & $+7$ \\
Zambia & 28.9 & 21.9 & 1256 & $-0.040$ & 17 & 16 & $+1$ \\
Burkina Faso & 27.1 & 24.1 & 1407 & 0.077 & 18 & 12 & $+6$ \\
Benin & 24.4 & 14.9 & 1252 & $-0.030$ & 19 & 17 & $+2$ \\
Mozambique & 22.3 & 35.9 & 1219 & $-0.058$ & 20 & 18 & $+2$ \\
Gabon & 21.6 & 2.3 & 1273 & 0.005 & 21 & 13 & $+8$ \\
Botswana & 19.5 & 2.6 & 1064 & $-0.169$ & 22 & 24 & $-2$ \\
Equatorial Guinea & 12.5 & 1.6 & 1195 & $-0.022$ & 23 & 19 & $+4$ \\
Comoros & 1.6 & 0.9 & 1188 & 0.099 & 24 & 20 & $+4$ \\
\bottomrule
\end{tabular}
\end{table}

\textbf{Sensitivity to $\kappa,\sigma$.} Since $\kappa,\sigma$ are not calibrated, we check how much the headline results depend on them, over a grid $\kappa\in\{0.3,0.5,0.7,1.0,1.5\}$, $\sigma\in\{0.10,0.20,0.30,0.40,0.50\}$ (25 combinations spanning slow/fast mean reversion and low/high volatility). \Cref{fig:heatmap} plots the resulting GDP-rank/performance-rank correlation: it stays in a narrow band, $[0.605,0.643]$, across the entire grid (baseline: $0.621$). The three worst performers relative to GDP -- Botswana, Sudan, and Zimbabwe -- are identical across every one of the 25 $(\kappa,\sigma)$ combinations tested; the three best performers are Egypt, Morocco, and Senegal at all but a small number of grid points, where Monte Carlo sampling noise occasionally reorders two closely-ranked countries. This is expected given the model's structure: every country is simulated starting from its own estimated steady state $z_{i,0}=\bar z_i$ under the same $\kappa,\sigma,\gamma,c,u$, so $\kappa,\sigma$ shift every country's simulated path in a similar way and largely wash out of the \emph{relative} ranking, which is driven overwhelmingly by the cross-sectional $z_i$ estimated in \Cref{sec:inference}'s regression -- exactly the quantity that \emph{is} identified from current data. This is a reassuring, if partial, robustness result: it does not substitute for a genuine panel calibration of $\kappa,\sigma$, but it shows the paper's central ranking claim is not an artifact of the particular illustrative values chosen for the un-identified dynamic parameters.

\begin{figure}[h!]
\centering
\includegraphics[width=0.55\linewidth]{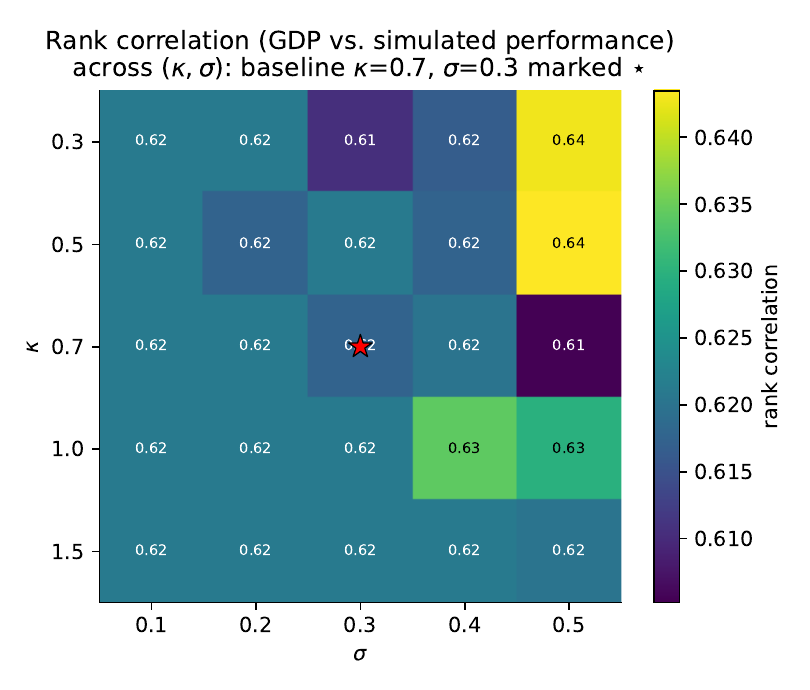}
\caption{Rank correlation between GDP and simulated performance across a $5\times5$ grid of $(\kappa,\sigma)$ values; the baseline $\kappa=0.7,\sigma=0.30$ is marked with a star.}
\label{fig:heatmap}
\end{figure}

\textbf{Remarks.} This exercise uses FIFA ranking points, not AFCON tournament points, as the performance measure $Y_i$, and a single cross-section rather than a panel, so $z_i$ should be read as ``footballing strength relative to GDP and population, as of mid-2026,'' and, per \Cref{sec:circularity}, as a residual that mechanically correlates with raw FIFA points at $\rho=0.81$; and $\gamma,c$, and (per the discussion above) $\kappa,\sigma$, are shared across countries and set at illustrative values rather than estimated individually. It nonetheless directly operationalizes, for the first time in this paper, the mechanism the theoretical model was built to capture, and the sensitivity analysis shows its central ranking result is not an artifact of the un-identified parameters.

\subsection{Discussion}

The model and its numerical illustration yield several insights:
\begin{itemize}
\item Economic resources determine potential, not performance: $f(X_{i,t})$ sets the scale, but $\theta_{i,t}$ determines how much of that potential is realized.
\item Efficiency is the key driver of success, and, because performance is multiplicative in efficiency, gains in efficiency compound: the optimal policy directs more investment toward already-efficient federations (\Cref{sec:numerics}), which helps explain the persistence of over- and under-performance relative to GDP across cycles.
\item Stochastic shocks explain variability in outcomes; volatility $\sigma_i$ widens the distribution of possible results without shifting the long-run target $\bar z_i$.
\item Optimal investment policies depend on structural parameters ($\kappa_i,\gamma_i,c_i$) and on the remaining time horizon, and are only well defined once a realistic investment budget cap is imposed.
\item The conditional efficiency residual used in \Cref{sec:results} is informative but should not be over-read: roughly two-thirds of its cross-country variance is, by construction, a restatement of the raw performance ranking (\Cref{sec:circularity}), and the remainder still mixes any genuine efficiency effect with omitted structural factors (diaspora, institutional quality, demographic structure) that are not separately controlled for here.
\end{itemize}

In particular, the cross-country illustration of \Cref{sec:numerics} shows, in a controlled numerical setting, that a country with moderate GDP but high efficiency can outperform a richer, less efficient country, and the 24-country cross-section of \Cref{sec:results} shows a broadly consistent pattern in real GDP, population, and FIFA ranking data for the actual AFCON 2025 field -- Senegal and Morocco substantially over-perform their economic potential, while South Africa, despite the largest GDP in the sample, and Tanzania and Zimbabwe under-perform theirs. This is qualitatively consistent with the empirical evidence in \citet{luizfadal2011,batarfireade2021}, though, as emphasized throughout, the strength of that consistency is bounded by the weak explanatory power ($R^2=0.35$) of the underlying production-function regression.

The model suggests that:
\begin{itemize}
\item Improving governance and training systems is more effective than increasing budgets alone -- as efficiency rises, increasingly, early institutional gains are worth protecting and building on.
\item Strategic investment in football infrastructure yields long-term efficiency gains, but is subject to a genuine budget constraint: unlimited investment is not just unrealistic but, absent a cap, is not even well defined as an optimization problem (\Cref{sec:control}).
\item Stability (low $\sigma_i$) reduces volatility and improves the reliability, though not necessarily the mean level, of expected performance.
\end{itemize}

\section{Conclusion and future work}\label{sec:conclusion}
This paper proposes a stochastic optimization framework for understanding football performance in Africa. By modeling efficiency as a controlled stochastic process, deriving and verifying the associated Hamilton--Jacobi--Bellman equation under a discounted objective, and positioning the resulting efficiency measure explicitly relative to the stochastic frontier and DEA traditions, we provide a theoretical explanation for the observed disconnect between economic capacity and sporting success, together with a normative optimal investment rule. We show that a bounded investment budget is required for well-posedness, give a verification theorem under explicit growth conditions, validate the numerical solution by a grid-refinement convergence study, and illustrate the resulting optimal policy and its cross-country implications by simulation. Applied to real data for the 24 AFCON 2025 national teams, the cross-sectional exercise reproduces the paper's central qualitative claim -- Senegal and Morocco over-perform their GDP-implied potential, while South Africa, Tanzania, and Zimbabwe under-perform theirs -- while being explicit that roughly two-thirds of the underlying residual's cross-country variance is, by construction, a restatement of the raw FIFA ranking, and that the residual still absorbs factors not modeled here.

Future work may extend this framework to:
\begin{itemize}
\item a genuine multi-cycle panel estimate of $(\kappa_i,\sigma_i,\bar z_i)$ (AFCON tournament points, and time-varying GDP, several editions deep, for every country) via the Kalman-filter strategy of \Cref{sec:estimation}, which alone can identify country-specific dynamic parameters and relax the $\bar z_i=z_i$ identification assumption used in \Cref{sec:results};
\item additional controls for the factors most likely to be absorbed into the current residual -- diaspora and expatriate-player data, an institutional-quality index, and working-age (15--35) male population in place of total population -- together with a properly specified stochastic frontier (SFA) decomposition of noise versus one-sided inefficiency, to reduce reliance on the symmetric-residual shortcut used here (\Cref{sec:sfa-dea,sec:circularity});
\item player-level data and network effects (diaspora) and dynamic tournament modeling with strategic interaction between federations;
\item analytical approximations to the HJB equation (e.g., local quadratic expansions in the spirit of \citet{dixit1991}) to complement the numerical solution.
\end{itemize}

\bibliographystyle{plainnat}   
\bibliography{references}       

\end{document}